\documentclass{amsart}
\usepackage{amsmath, amssymb, amsthm, mathrsfs, enumerate}
\newtheorem{theorem}{Theorem}
\newtheorem{proposition}{Proposition}[section]
\newtheorem{lemma}{Lemma}[section]
\newtheorem{claim}{Claim}
\setcounter{claim}{-1}
\theoremstyle{definition}
\newtheorem{definition}{Definition}[section]

\title[Singularities of the dual curve in positive characteristic]{Singularities of the dual curve of a certain plane curve in positive characteristic}
\author{Kosuke Komeda}

\subjclass[2010]{Primary~14H50, Secondary~14H20}
\keywords{plane curve, dual curve, positive characteristic, singularity}
\address{Department of Mathematics, Graduate School of Science, Hiroshima University, 1-3-1 Kagamiyama, Higashi-Hiroshima, 739-8526 Japan}
\email{d181679@hiroshima-u.ac.jp}

\begin{document}
\maketitle

\begin{abstract}
It is well known that the Gauss map for a complex plane curve is birational, whereas the Gauss map in positive characteristic is not always birational. Let $q$ be a power of a prime integer. We study a certain plane curve of degree $q^2+q+1$ for which the Gauss map is inseparable with inseparable degree $q$. As a special case, we show a relation between the dual curve of the Fermat curve of degree $q^2+q+1$ and the Ballico-Hefez curve.
\end{abstract}
\renewcommand{\thefootnote}{\fnsymbol{footnote}}
\renewcommand{\thefootnote}{\arabic{footnote}}
\section{Introduction}
Let $p$ be a prime integer, and $q$ a power of $p$. We work over an algebraically closed field $\Bbbk$ of charcteristic $p$. We consider a plane curve $C$ of degree $q^2+q+1$ defined by a homogeneous polynomial of the form 
\begin{equation}
F=\sum_{i,j,k} a_{ijk} x_i x_j ^q x_k ^{q^2},\label{1}
\end{equation}
where $a_{ijk}$ are coefficients in $\Bbbk$, and $[x_0 : x_1 : x_2 ]$ is a homogeneous coordinate system in $\mathbb{P} ^2 $. If $a_{ijk} $ are general, then the plane curve $C$ is smooth. The condition that the defining polynomial of $C$ is of the form (\ref{1}) is independent of the choice of homogeneous coordinates of $\mathbb{P}^2$ (see Proposition 2.1).

Let $C^{\vee } $ be the dual curve of the plane curve $C$. 
The Gauss map 
\begin{equation}
\Gamma : C\to C^{\vee } ; [x_0 : x_1 : x_2] \mapsto \left[\frac{\partial F}{\partial x_0} : \frac{\partial F}{\partial x_1} : \frac{\partial F}{\partial x_2}\right] \label{8}
\end{equation}
is an inseparable morphism. For every $i$, the partial derivative of $F$ with respect to $x_i$ is
\begin{equation}
\frac{\partial F}{\partial x_i} = \sum_{j, k} a_{ijk} x_j ^q x_k ^{q^2} = \left( \sum_{j,k} \alpha _{ijk} x_j x_{k} ^q \right) ^q,\label{5}
\end{equation}
where $\alpha _{ijk} = a_{ijk} ^{1/q} $. Thus, if $a_{ijk}$ are general, then the inseparable degree of the Gauss map is $q$. The purpose of this paper is to study singularities of the dual curve $C^{\vee}$ of a plane curve $C$ defined by a polynomial of the form (\ref{1}).

We define $\mathscr{C}$ to be the set of all the projective plane curves defined by homogenious polynomials of the form (\ref{1}). Note that $\mathscr{C}$ is identified with $\mathbb{P}^{26}$. 

Note that all tangent lines of the curve $C\in \mathscr{C}$ intersect $C$ with multiplicity at least $q$ at the tangent point. In our case, a double tangent and a flex are defined as following:
\begin{definition}
Let $m$ be an integer at least 2. We define an \emph{m-ple tangent} to be a tangent line of $C$ which has distinct $m$ tangent points with multiplicity $q$, and a \emph{flex} to be a point at which the tangent line intersects $C$ with multiplicity $q+1$. A 2-ple tangent is called a double tangent.
\end{definition}
\begin{theorem}
Suppose that $C$ is a general member of $\mathscr{C}$. Then 
\begin{enumerate}[\normalfont (i)]
\item the degree of the dual curve $C^{\vee}$ is $(q^2+q+1)(q+1)$,
\item the dual curve $C^{\vee}$ has only ordinary nodes as its singularities, 
\item the number of ordinary nodes of $C^{\vee } $ i.e. double tangent lines of C, is 
\[
\frac{q(q^2+q+1)(q^3+3q^2+3q-1)}{2},
\] 
and
\item the number of flexes of C is 
\[
(q^3+2q^2-q+1)(q^2+q+1).
\]
\end{enumerate}
\end{theorem}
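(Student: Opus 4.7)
The plan is to introduce the auxiliary morphism
\[
\gamma : C \to \mathbb{P}^2, \quad [x_0:x_1:x_2]\mapsto [G_0:G_1:G_2], \qquad G_i := \sum_{j,k}\alpha_{ijk}\,x_j x_k^q,
\]
so that $\Gamma = \phi \circ \gamma$, where $\phi : \mathbb{P}^2 \to \mathbb{P}^2$ is the $q$-th power Frobenius. Since the restriction of $\phi$ to any curve has degree $q$, the equality $\deg\Gamma = q$ (valid for a general member of $\mathscr{C}$ by (\ref{5})) forces $\gamma$ to be birational onto its image $D := \gamma(C)$. Moreover $\phi$ induces a bijective universal homeomorphism $D \to C^\vee$ that preserves the analytic type of each singularity, so the study of $C^\vee$ reduces to the study of the plane curve $D$, which is birational to the smooth curve $C$.

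Part (i) is then a B\'ezout computation: a general linear combination of the $G_i$'s cuts $C$ in $d(q+1)$ points, so $\deg C^\vee = \deg D = (q^2+q+1)(q+1)$. Granting (ii), part (iii) follows from the genus difference formula applied to $D$: the arithmetic genus is $\binom{d(q+1)-1}{2}$, the geometric genus equals $g(C) = (d-1)(d-2)/2$, and each ordinary node contributes exactly $1$ to the difference. Direct expansion yields the stated count $q(q^2+q+1)(q^3+3q^2+3q-1)/2$, and each node corresponds to a pair of smooth points of $C$ sharing a tangent line, i.e., to a double tangent.

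For part (iv) I would apply the St\"ohr--Voloch ramification formula to the linear system $\mathfrak{g}^2_d$ of hyperplane sections of $C \subset \mathbb{P}^2$. Its generic order sequence at a smooth point is $(\epsilon_0,\epsilon_1,\epsilon_2) = (0,1,q)$, because the tangent meets $C$ with multiplicity $q$ at a generic point; at a flex the third order jumps to $q+1$. For a general $C$ every flex is simple (sequence exactly $(0,1,q+1)$, contributing $1$ to the ramification divisor), so the flex count equals
\[
(\epsilon_0+\epsilon_1+\epsilon_2)(2g-2) + 3d = (q+1)\,d(d-3) + 3d = (q^2+q+1)(q^3+2q^2-q+1).
\]

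The main obstacle is part (ii): showing that for general $a_{ijk}$ the only singularities of $D$ are ordinary nodes. One must verify (a) that $\gamma$ is an immersion at every point of $C$, so that $D$ has no cusps or worse unibranch singularities, and (b) that the non-injective locus of $\gamma$ consists of finitely many pairs of points, each contributing a node with two distinct tangent directions, so that $D$ has no tacnodes or triple points. Both conditions are Zariski-open on the $26$-dimensional space $\mathscr{C}$, so it suffices to exhibit one member for which they hold; a natural choice is the Fermat curve $x_0^{q^2+q+1}+x_1^{q^2+q+1}+x_2^{q^2+q+1}=0$ mentioned in the abstract, whose symmetry makes the explicit computations with the $G_i$ and their derivatives tractable.
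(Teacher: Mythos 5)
Your map $\gamma$ is exactly the paper's reduced Gauss map $\Gamma_{\mathrm{red}}$, and parts (i) and (iii) follow the paper's route; part (iv) via St\"ohr--Voloch is a genuinely different and arguably cleaner argument than the paper's (which sets up the correspondence $T(u)=T_uC\cdot C-qu$ on $C\times C$, computes the algebraic equivalence class of its correspondence curve, and intersects with the diagonal). Your count $(\epsilon_0+\epsilon_1+\epsilon_2)(2g-2)+3d=(q+1)d(d-3)+3d$ agrees with the paper's answer, and the relevant Wronskian determinant $\det\bigl(\tbinom{j_i}{\epsilon_k}\bigr)=q+1\not\equiv 0 \pmod p$ at a simple flex, so each simple flex contributes exactly $1$. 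But you still owe the fact that a general $C$ has no point with $\mathrm{mult}_u(T_uC,C)>q+1$ and that the generic order is exactly $q$; the paper proves precisely this by a dimension count (first lemma of Section 4), and your argument needs the same input.

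The decisive gap is in part (ii). You propose to verify the two open conditions (immersion everywhere; only transverse double identifications) on the Fermat curve. That witness fails: by Theorem 2 of this paper the dual of the Fermat curve of degree $q^2+q+1$ has, besides nodes, $3(q^2+q+1)$ \emph{unibranch} singular points of Milnor number $q^2(q+1)$, so $\Gamma_{\mathrm{red}}$ is not an immersion at the corresponding points of $C_0$; accordingly its node count $(q^2+q+1)^2(q^2-q)/2=q(q^2+q+1)(q^3-1)/2$ differs from the generic count in (iii). The Fermat curve is a degenerate member of $\mathscr{C}$ for exactly the property you want to test, and no explicit good witness is readily available for general $q$. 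The paper instead argues by incidence-variety dimension counts: for each bad configuration (three collinear tangency points; a point where $d\Gamma_{\mathrm{red}}$ vanishes; a bitangent whose two image branches share a tangent direction, detected by the vanishing of a $2\times 2$ determinant in the $\alpha_{ijk}$) it shows that the locus of curves realizing that configuration at a fixed point-line datum has codimension in $\mathscr{C}$ exceeding the dimension of the space of such data, so a general $C$ avoids them all. You need some substitute for this. A related, smaller gap sits in (i): the identity $\partial F/\partial x_i=G_i^q$ only shows $q\mid\deg\Gamma$; the equality $\deg\Gamma=q$, i.e.\ the birationality of $\gamma$ onto its image, is not a consequence of that identity and is obtained in the paper from the explicit degree computation of the Fermat dual (where birationality does hold even though immersivity fails) together with semicontinuity.
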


We compare our theorem with the classical situation. Let $\tilde{C}$ be a general \textit{complex} plane curve of degree $d$. Then the degree of the dual curve $\tilde{C} ^{\vee } $ is $d(d-1)$. Moreover, each flex of $\tilde{C}$ corresponds to a cusp of $\tilde{C} ^{\vee}$, whereas each flex of $C\in \mathscr{C}$ correponds to  a smooth point of $C^{\vee}$. The singularities of $\tilde{C} ^{\vee}$ consist of $\frac{1}{2}d(d-2)(d-3)(d+3)$ ordinary nodes and $3d(d-2)$ cusps.

As a special case, we consider the singularities of the dual curve of the Fermat curve $C_0\in \mathscr{C}$ of degree $q^2+q+1$. We will show that the dual curve $C_0 ^{\vee}$ is related to the Ballico-Hefez curve.

Let $\gamma _d : \mathbb{P} ^2 \to \mathbb{P} ^2 $ be a morphism defined by $\gamma _d ([x_0 : x_1 : x_2]) = [x_0 ^d : x_1 ^d : x_2 ^d ]$, and $l_0$ be a line $x_0+x_1+x_2=0 $ in $\mathbb{P} ^2 $. 
\begin{definition}
The \emph{Ballico-Hefez curve} is the image of the line $l_0$ of the morphism $\gamma _{q+1} $.
\end{definition}
In \cite{MR3323512}, Hoang and Shimada define the Ballico-Hefez curve to be the image of the morphism $\mathbb{P} ^1 \to \mathbb{P} ^2 $ defined by 
\[
[s : t] \mapsto [s^{q+1} : t^{q+1} : st^q + s^q t].
\]
Note, however, that the image of this morphism is projectively isomorphic to the image of the line $l_0$ of the morphism $\gamma _{q+1} $. 
\begin{theorem}
Let $B$ be the Ballico-Hefez curve. Let $\gamma _{q^2+q+1} : \mathbb{P} ^2 \to \mathbb{P} ^2 $ be a morphism defined by the above. If $C_0 \in \mathscr{C}$ is the Fermat curve of the degree $q^2+q+1$, then
\begin{enumerate}[\normalfont (i)]
\item the dual curve $C_0^{\vee }$ is $\gamma _{q^2+q+1} ^{-1} (B) $, and

\item the singularities of $C^{\vee}_0$ consist of $(q^2+q+1)^2(q^2-q)/2$ ordinary nodes, and $3(q^2+q+1)$ singular points with the Milnor number $q^2(q+1)$ . 
\end{enumerate}
\end{theorem}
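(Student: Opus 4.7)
My plan is to deduce (ii) from (i) together with a local analysis of the finite morphism $\gamma := \gamma_{q^2+q+1}$, reducing the singularity analysis of $C_0^\vee$ to the intrinsic structure of the Ballico-Hefez curve $B$ and its interaction with the three coordinate axes.

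For (i), I first compute $\partial F/\partial x_i = n x_i^{n-1}$ for $F = x_0^n + x_1^n + x_2^n$ (with $n := q^2+q+1$ invertible in $\Bbbk$ since $n \equiv 1 \pmod{p}$), so
\[
\Gamma([x_0:x_1:x_2]) = [x_0^{q(q+1)} : x_1^{q(q+1)} : x_2^{q(q+1)}].
\]
Setting $Y_i := x_i^{q(q+1)}$ and $z_i := x_i^{qn}$, one checks $z_i^{q+1} = Y_i^n$ and $\sum z_i = F^q = 0$ on $C_0$; hence $z \in l_0$ and so $\gamma(\Gamma(x)) = \gamma_{q+1}(z) \in B$, giving $C_0^\vee \subseteq \gamma^{-1}(B)$. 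Both sides are plane curves of degree $n(q+1)$: for $\gamma^{-1}(B)$ by $\deg B = q+1$, and for $C_0^\vee$ by a B\'ezout computation combined with the birationality on $C_0$ of the separable part $[x]\mapsto [x_0^{q+1}:x_1^{q+1}:x_2^{q+1}]$ of $\Gamma$. Irreducibility of $C_0^\vee$ now forces $\gamma^{-1}(B)$ to be reduced with $C_0^\vee$ as its unique component, proving (i).

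For (ii), $\gcd(n,p) = 1$ implies that $\gamma$ is \'etale on the torus $T := \{y_0 y_1 y_2 \ne 0\}$ and tamely ramified of index $n$ transverse to each coordinate axis. A preliminary lemma (obtained by parametrizing $B$ from $l_0$) establishes that $B$ is rational of degree $q+1$, meets each coordinate axis at a single smooth point with contact of order exactly $q+1$, and that its remaining singularities are $q(q-1)/2$ ordinary nodes, all lying in $T$ (the count being the total $\delta$-invariant forced by the genus formula for a rational curve of arithmetic genus $q(q-1)/2$). Each such node pulls back through the \'etale $\gamma|_T$ to $n^2$ ordinary nodes of $C_0^\vee$, yielding $n^2 \cdot q(q-1)/2 = (q^2+q+1)^2(q^2-q)/2$ ordinary nodes in total.

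The $3n = 3(q^2+q+1)$ remaining singular points of $C_0^\vee$ are the preimages under $\gamma$ of the three axis-tangency points of $B$. At such a preimage, I take local analytic coordinates $(\tilde y_0, \tilde y_2)$ in which $\gamma$ sends $\tilde y_0 \mapsto \tilde y_0^n$ (transverse to the axis) and acts by a unit in the tangential direction. The parametrization $t \mapsto (t^{q+1}, \varepsilon(1+t)^{q+1})$, expanded via $(1+t)^{q+1} = 1 + t + t^q + t^{q+1}$ in characteristic $p$, gives $B$ the local equation $Y_0 = V^{q+1} + O(V^{2q})$, where $V$ is a unit multiple of $\tilde y_2$ after pullback. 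The pullback thus has local equation
\[
f(\tilde y_0, \tilde y_2) = \tilde y_0^n - c\,\tilde y_2^{q+1} + \tilde y_2^{q+2} h(\tilde y_2) = 0
\]
for some $c \in \Bbbk^\times$ and $h \in \Bbbk[[\tilde y_2]]$. Then $\partial f/\partial \tilde y_0 = n\,\tilde y_0^{n-1}$ and $\partial f/\partial \tilde y_2 = \tilde y_2^q \cdot u(\tilde y_2)$ with $u$ a unit, so the Jacobian ideal equals $(\tilde y_0^{n-1}, \tilde y_2^q)$ and the Milnor number is $(n-1)q = q^2(q+1)$. The principal obstacle of the proof is the structural lemma identifying the singularities of $B$ (particularly the verification that there are no singularities worse than nodes in $T$); the local Milnor computation then follows directly from the explicit Jacobian factorization above.
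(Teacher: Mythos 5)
Your overall strategy coincides with the paper's: both proofs transport the known singularity structure of the Ballico--Hefez curve $B$ through $\gamma_{q^2+q+1}$, using that this map is \'etale of degree $(q^2+q+1)^2$ off the coordinate triangle $X$ and then performing a separate local computation over the three tangency points of $B\cap X$. The differences are in execution. For (i) you verify $C_0^\vee\subseteq\gamma^{-1}(B)$ by direct manipulation of exponents and then match degrees; the paper instead applies the identity $\gamma_{q^2+q+1}\circ\gamma_{q+1}=\gamma_{q+1}\circ\gamma_{q^2+q+1}$ to $\gamma_{q^2+q+1}(C_0)=l_0$ --- same content, and your degree/irreducibility step makes explicit something the paper leaves implicit. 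For the local analysis at the axis points, you parametrize $B$ from $l_0$, read off the local equation $Y_0=V^{q+1}+O(V^{2q})$, and factor the Jacobian ideal of the pullback as $(\tilde y_0^{\,n-1},\tilde y_2^{\,q})$; the paper instead substitutes into the explicit Hoang--Shimada polynomial $h$ (with separate formulas for $p=2$ and $p$ odd) and invokes a semi-quasihomogeneous Milnor-number lemma with weights $(q+1,\,q^2+q+1)$. Your route is cleaner and avoids the characteristic case split, but two cautions are in order. First, a local equation produced from a parametrization is only determined up to a unit, and in characteristic $p$ the Milnor number of $uf$ need not equal that of $f$; here the unit is pulled back from downstairs, so its $\tilde y_0$-derivative is divisible by $\tilde y_0^{\,n-1}$ and its other contributions land in $(\tilde y_0^{\,n},\tilde y_2^{\,q+1})$, whence the ideal $(\tilde y_0^{\,n-1},\tilde y_2^{\,q})$ is unaffected --- but this must be checked, or one should compute with the honest dehomogenization of $h(x_0^n,x_1^n,x_2^n)$ as the paper does. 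Second, the fact that the singularities of $B$ away from $X$ are exactly $(q^2-q)/2$ ordinary nodes --- which you correctly isolate as the principal remaining obstacle, since the genus count only gives the total $\delta$-invariant, not the nodality --- is not proved in the paper either: it is imported from Fukasawa--Homma--Kim, Theorem 2.2, so your argument needs either that citation or a direct verification that $\gamma_{q+1}|_{l_0}$ is an immersion with only transverse double points off $X$.
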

The author is grateful to Professor Ichiro Shimada for helpful comments. Part of this work was done during the author's stay in Vietnam. He is also grateful to Professor Pho Duc Tai in Vietnam National University of Science for many helpful suggestions. Moreover, the author is grateful to the referee for pointing out the author's mistakes and helpful comments.

\section{Preliminaries}
From now, let $\Bbbk$ be an algebraically closed field of characteristic $p>0$.
\begin{proposition}
Let $C$ be a plane curve. The defining polynomial of $C$ being of the form (\ref{1}) is a property independent of the choice of homogeneous coordinates.
\end{proposition}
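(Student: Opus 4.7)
\medskip

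The plan is to check directly that the linear span of the monomials $x_i x_j^q x_k^{q^2}$ inside the space of homogeneous degree $q^2+q+1$ forms is stable under every linear change of coordinates on $\mathbb{P}^2$. Once this is established, a curve whose defining polynomial lies in this subspace in one coordinate system has the same property in any other coordinate system, which is exactly the statement of the proposition.

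Concretely, I would fix a linear change of variables $x_i = \sum_{l} c_{il} y_l$ with $(c_{il}) \in \mathrm{GL}_3(\Bbbk)$, and compute what happens to a single monomial $x_i x_j^q x_k^{q^2}$ after substitution. The crucial point is that $q$ and $q^2$ are powers of the characteristic $p$, so the Frobenius endomorphism is additive: one has
\begin{equation*}
x_j^q = \Bigl(\sum_{m} c_{jm} y_m\Bigr)^q = \sum_{m} c_{jm}^q y_m^q,
\qquad
x_k^{q^2} = \sum_{n} c_{kn}^{q^2} y_n^{q^2}.
\end{equation*}
Expanding the product yields
\begin{equation*}
x_i x_j^q x_k^{q^2} = \sum_{l,m,n} c_{il}\, c_{jm}^q\, c_{kn}^{q^2}\; y_l\, y_m^q\, y_n^{q^2},
\end{equation*}
which is visibly again a $\Bbbk$-linear combination of monomials of the form $y_l y_m^q y_n^{q^2}$. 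Summing against the coefficients $a_{ijk}$ shows that $F$ is still of the required shape in the new coordinates, with explicit new coefficients $a'_{lmn} = \sum_{i,j,k} a_{ijk}\, c_{il} c_{jm}^q c_{kn}^{q^2}$.

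There is no real obstacle here; the only thing to make sure of is that no implicit cross-terms from the multinomial expansion survive, and this is precisely guaranteed by the fact that $q$ is a $p$-th power so the Frobenius is a ring homomorphism. I would end the proof by remarking that the linear map
\begin{equation*}
\Phi : V \otimes V \otimes V \longrightarrow \mathrm{Sym}^{q^2+q+1} V,
\qquad
e_i \otimes e_j \otimes e_k \longmapsto x_i x_j^q x_k^{q^2},
\end{equation*}
where $V = \Bbbk x_0 \oplus \Bbbk x_1 \oplus \Bbbk x_2$, is equivariant with respect to the action of $\mathrm{GL}(V)$ twisted by Frobenius on the second and third tensor factors; the image of $\Phi$ is therefore a $\mathrm{GL}(V)$-stable subspace, which is the intrinsic statement underlying the proposition.
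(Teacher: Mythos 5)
Your proof is correct and follows essentially the same route as the paper: substitute the linear change of coordinates and use the additivity of Frobenius (since $q$ and $q^2$ are powers of $p$) to see that each monomial $x_i x_j^q x_k^{q^2}$ expands into a linear combination of monomials $y_l y_m^q y_n^{q^2}$, yielding the same formula for the new coefficients. The closing remark about the Frobenius-twisted equivariant map is a nice conceptual reformulation but adds nothing beyond what the direct computation already establishes.
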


\begin{proof}
Under the coordinates change
\[
x_i = \sum_{l=0}^{2} t_{il} y_l \ \ (t_{il} \in k),
\]
a homogeneous polynomial $F$ of the form (\ref{1}) is transformed into
\begin{equation*}
\begin{split}
F&=\sum_{i, j, k} a_{ijk} \left( \sum_{l=0}^{2} t_{il} y_l \right) \left( \sum_{m=0}^{2} t_{im} y_m \right)^q \left( \sum_{n=0}^{2} t_{in} y_n \right)^{q^2} \\
&=\sum_{i,j,k} \sum_l \sum_m \sum_n a_{ijk} t_{jl} t_{im} ^q t_{kn} ^{q^2} y_l y_m ^q y_n ^{q^2} \\
&=\sum_{l, m, n} b_{lmn} y_l y_m ^q y_n ^{q^2}, 
\end{split}
\end{equation*}
where $b_{lmn} =\displaystyle \sum_{l, m, n} a_{ijk} t_{il} t_{jm}^q t_{kn} ^{q^2}$.
\end{proof} 

\begin{lemma}
If $a_{ijk} $ are general, then the plane curve $C$ is smooth.
\end{lemma}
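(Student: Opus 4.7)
The plan is to show that the locus of singular curves in $\mathscr{C} = \mathbb{P}^{26}$ is a proper closed subset, by a standard incidence-variety dimension count.

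First, I would set up the universal incidence variety
\[
\mathcal{I} = \{(C, P) \in \mathscr{C} \times \mathbb{P}^2 : P \text{ is a singular point of } C\},
\]
and estimate its dimension fiber by fiber over $\mathbb{P}^2$. For a fixed $P = [P_0 : P_1 : P_2] \in \mathbb{P}^2$, the condition that $P$ is singular on $C$ is $F(P) = 0$ and $(\partial F/\partial x_i)(P) = 0$ for $i = 0, 1, 2$. Since $\deg F = q^2+q+1 \equiv 1 \pmod p$, Euler's identity gives
\[
(q^2+q+1)\, F(P) = \sum_{i=0}^{2} P_i\,\frac{\partial F}{\partial x_i}(P),
\]
so $F(P) = 0$ follows from vanishing of the three partial derivatives at $P$. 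Hence the fiber over $P$ is cut out by at most three linear conditions on the $27$-dimensional coefficient space.

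Next, I would verify that these three linear forms, viewed as elements of the dual of the space of coefficients $(a_{ijk})$, are in fact linearly independent for every $P$. The crucial observation from (\ref{5}) is
\[
\frac{\partial F}{\partial x_i}(P) = \sum_{j,k} a_{ijk}\, P_j^{q}\, P_k^{q^{2}},
\]
so the $i$-th form depends only on coefficients $a_{ijk}$ with first index $i$. Thus the three forms (for $i = 0, 1, 2$) involve pairwise disjoint sets of variables. Moreover, since $P \neq 0$ in $\mathbb{P}^2$, there exists some $P_s \neq 0$, whence the coefficient $P_s^{q} P_s^{q^{2}}$ of $a_{iss}$ is nonzero, so each form is itself nonzero. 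Independence follows immediately.

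Consequently each fiber of $\mathcal{I} \to \mathbb{P}^2$ has codimension exactly $3$ in $\mathscr{C}$, giving $\dim \mathcal{I} \le 26 + 2 - 3 = 25$. The projection $\mathcal{I} \to \mathscr{C}$ therefore has image of dimension at most $25$, a proper closed subset of the $26$-dimensional parameter space $\mathscr{C}$. Any $C$ in the complement is smooth, proving the lemma. There is no real obstacle here; the only place to be careful is the application of Euler's identity, which works precisely because $\gcd(q^2+q+1, p) = 1$, and the convenient disjointness of the variables in the three partial derivatives makes the independence check trivial.
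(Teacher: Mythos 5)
Your argument is correct, but it is a genuinely different (and much longer) route than the one the paper takes. The paper's entire proof is two sentences: the Fermat curve $x_0^{q^2+q+1}+x_1^{q^2+q+1}+x_2^{q^2+q+1}$ lies in $\mathscr{C}$ (take $a_{000}=a_{111}=a_{222}=1$, all other coefficients zero), it is smooth because its partial derivatives are $x_i^{q^2+q}$ (using $q^2+q+1\equiv 1 \pmod p$) and these have no common zero in $\mathbb{P}^2$, and smoothness is an open condition in the family. You instead bound the dimension of the incidence variety $\mathcal{I}\subset\mathscr{C}\times\mathbb{P}^2$ of singular pairs and conclude that its projection to $\mathscr{C}=\mathbb{P}^{26}$ is a proper closed subset. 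Your dimension count is sound: the three forms $P\mapsto\sum_{j,k}a_{ijk}P_j^qP_k^{q^2}$ involve pairwise disjoint blocks of coefficients and are each nonzero, so every fiber of $\mathcal{I}\to\mathbb{P}^2$ has codimension exactly $3$, the Euler relation $\sum_i x_i\,\partial F/\partial x_i=(q^2+q+1)F=F$ makes the equation $F(P)=0$ redundant, and properness of the projection gives closedness of the image. What your approach buys is independence from any explicit example: it shows the discriminant locus has codimension at least $1$ without having to exhibit a single smooth member, and the disjointness of the variable blocks even makes the codimension count uniform over all $P$. What it costs is length; the paper's proof by exhibiting the Fermat curve is essentially immediate, and the Fermat curve is needed later in the paper anyway (Theorem 2), so the example comes for free.
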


\begin{proof}
The Fermat curve of degree $q^2+q+1$ is smooth. Being smooth is an open condition.
\end{proof}

\section{Proof of the first half of Theorem 1}
We define the \emph{reduced Gauss map} $\Gamma _{\mathrm{red}} : C \to (\mathbb{P} ^2)^{\vee }$ of $C\in \mathscr{C} $ by
\begin{equation*}
\begin{split}
&\Gamma _{\mathrm{red}} ([x_0 : x_1 : x_2]) \\
&= \left[\left(\frac{\partial F}{\partial x_0}(x_0, x_1, x_2) \right)^{1/q} : \left(\frac{\partial F}{\partial x_1}(x_0, x_1, x_2) \right)^{1/q} : \left(\frac{\partial F}{\partial x_2}(x_0, x_1, x_2) \right)^{1/q} \right].
\end{split}
\end{equation*}
\begin{claim}
The reduced Gauss map $ C \to C^{\vee } $ is the morphism of separable degree $1$.
\end{claim}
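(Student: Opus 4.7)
The plan is to exploit the factorisation $\Gamma = F_q \circ \Gamma_{\mathrm{red}}$, where $F_q\colon \mathbb{P}^2 \to \mathbb{P}^2$ is the $q$-th power Frobenius $[y_0:y_1:y_2]\mapsto [y_0^q:y_1^q:y_2^q]$; this is immediate from (\ref{5}). Since the restriction of $F_q$ to any irreducible plane curve is purely inseparable of degree $q$, the entire inseparability of $\Gamma$ comes from $F_q$, and $\deg_{\mathrm{sep}}(\Gamma_{\mathrm{red}}) = \deg_{\mathrm{sep}}(\Gamma)$. The claim is therefore equivalent to $\Gamma_{\mathrm{red}}$ being birational onto its image (which is then identified with $C^{\vee}$ via $F_q$). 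I would first establish this for the Fermat curve $C_0\in\mathscr{C}$ and then extend to a general $C$ by an openness argument.

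For the Fermat curve, $\partial F_0/\partial x_i = (q^2+q+1)\,x_i^{q^2+q}$; since $q^2+q+1$ is a unit in $\Bbbk$, the reduced Gauss map on $C_0$ coincides with $\gamma_{q+1}|_{C_0}$. To check generic injectivity: if $\gamma_{q+1}(p) = \gamma_{q+1}(p')$ with $p = [a_0:a_1:a_2]$, then $p' = [\zeta_0 a_0:\zeta_1 a_1:\zeta_2 a_2]$ for some $(q+1)$-th roots of unity $\zeta_i$. Using $d = q(q+1)+1$ to obtain $\zeta_i^d = \zeta_i$, the condition $p'\in C_0$ reads $\zeta_0 a_0^d + \zeta_1 a_1^d + \zeta_2 a_2^d = 0$; combined with $a_0^d + a_1^d + a_2^d = 0$, this forces $(\zeta_0,\zeta_1,\zeta_2)$ proportional to $(1,1,1)$ for a generic $p$, giving $p' = p$ in $\mathbb{P}^2$. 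Separability follows from the Jacobian of $\gamma_{q+1}$ being $\mathrm{diag}((q+1)a_0^q,(q+1)a_1^q,(q+1)a_2^q)$, which is invertible off the coordinate axes since $q+1$ is a unit in $\Bbbk$; its restriction to $T_p C_0$ at a generic $p$ is therefore nonzero. Together these show $\Gamma_{\mathrm{red}}$ is birational for $C_0$.

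For a general $C\in\mathscr{C}$, the condition that $\Gamma_{\mathrm{red}}$ be birational onto its image is Zariski open on $\mathscr{C}\cong\mathbb{P}^{26}$ and is satisfied at $C_0$, hence on a nonempty open subset. The main obstacle I anticipate is making this openness precise, since the image subschemes of $\mathbb{P}^2$ do not a priori form a flat family over $\mathscr{C}$; one route is via the incidence variety $\{(C,p_1,p_2)\in\mathscr{C}\times C\times C : p_1\neq p_2,\ \Gamma_{\mathrm{red}}(p_1) = \Gamma_{\mathrm{red}}(p_2)\}$, showing that its projection to $\mathscr{C}$ has proper closed image. A more hands-on alternative is to Taylor-expand $F(p+tv)$ along a tangent direction $v$ via the characteristic-$p$ identity $(a+tv)^q = a^q + t^q v^q$, which shows that at a generic $p$ the tangent line meets $C$ with intersection multiplicity exactly $q$ at $p$; this forces $p$ to be the unique point of $C$ with tangent $\Gamma(p)$, yielding generic injectivity of $\Gamma$ (and hence of $\Gamma_{\mathrm{red}}$) for any $C$ with sufficiently general coefficients.
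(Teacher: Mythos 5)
Your treatment of the Fermat curve is correct and takes a genuinely different route from the paper. The paper deduces the claim for $C_0$ from the degree computation $\deg C_0^{\vee}=d(d-1)/q$, which in turn rests on the Section~5 identification $C_0^{\vee}=\gamma_{q^2+q+1}^{-1}(B)$ with the Ballico--Hefez curve; you instead observe directly that $\Gamma_{\mathrm{red}}|_{C_0}=\gamma_{q+1}|_{C_0}$ (using $q^2+q+1\equiv q+1\equiv 1 \pmod p$) and prove generic injectivity by the roots-of-unity argument $\zeta_i^{d}=\zeta_i$, so that a second preimage of a generic point would have to lie on one of finitely many curves $\sum\zeta_i x_i^d=0$ distinct from $C_0$. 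That is self-contained, avoids the forward reference, and even yields separability (hence birationality onto the image) as a bonus. The factorization $\Gamma=F_q\circ\Gamma_{\mathrm{red}}$ and the identity $\deg_{\mathrm{sep}}\Gamma_{\mathrm{red}}=\deg_{\mathrm{sep}}\Gamma$ are also correctly used.

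The passage from $C_0$ to a general $C$ is where both you and the paper are thin, but your two proposed repairs each contain a genuine error. Route (a): the projection of $\{(C,p_1,p_2): p_1\neq p_2,\ \Gamma_{\mathrm{red}}(p_1)=\Gamma_{\mathrm{red}}(p_2)\}$ to $\mathscr{C}$ does \emph{not} have proper closed image --- it is surjective, because every $C\in\mathscr{C}$ (including $C_0$) has double tangents; indeed Theorem~1(iii) counts roughly $q^6/2$ of them. What you actually need is that the fibre of this incidence variety over a general $C$ is finite (equivalently, zero-dimensional), which is an upper-semicontinuity statement requiring care about the closure near the diagonal, not a statement about the image. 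Route (b) is a non sequitur: knowing that $\mathrm{mult}_p(T_pC,C)=q$ exactly at a generic $p$ does not force $p$ to be the unique tangency point of $T_pC$, since the remaining $q^2+1$ intersections of $T_pC$ with $C$ are unconstrained and nothing prevents $q$ of them from coalescing into a second tangency --- this is precisely what happens along a double tangent, and if $\Gamma$ had separable degree $s>1$ the generic tangent line would be an $s$-ple tangent with every tangency of multiplicity exactly $q$. So neither route as stated closes the gap; the honest statement of what remains is that generic injectivity of $\Gamma_{\mathrm{red}}$ (equivalently, that the generic tangent line is tangent at only one point) is an open condition on $\mathscr{C}$, verified at $C_0$.
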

\begin{proof}
We will prove that the degree of the dual curve of the Fermat curve of degree $q^2+q+1$ is $d(d-1)/q$, (see Section 5), and hence the reduced Gauss map of the Fermat curve is the morphism of separable degree 1. Thus the reduced Gauss map $ C \to C^{\vee } $ is also the morphism of separable degree 1.
\end{proof}
We denote the degree of a curve $C\in\mathscr{C}$ by $d = q^2+q+1$. If $C\in \mathscr{C}$ is general, then the Gauss map $\Gamma$ is an inseparable morphism of inseparable degree $q$ by (\ref{5}). Thus the degree of $C^{\vee}$ is
\[
\frac{d(d-1)}{q} = \frac{(q^2+q+1)(q^2+q)}{q} = (q^2+q+1)(q+1).
\]
In order to prove (ii) of Theorem 1, first we prove the following:
\begin{claim}
If $C\in \mathscr{C}$ is general, then the curve $C$ has no $m$-ple tangent line for $m\geq 3$.
\end{claim}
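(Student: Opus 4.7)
The strategy is a standard dimension count on an incidence variety. It suffices to show that a general $C \in \mathscr{C}$ admits no line tangent at three or more distinct points, since this excludes $m$-ple tangents for every $m \geq 3$. I would set
\[
I := \{(C, L, P_1, P_2, P_3) \in \mathscr{C} \times (\mathbb{P}^2)^{\vee} \times (\mathbb{P}^2)^3 : P_1, P_2, P_3 \in L \text{ are distinct and } L \text{ is tangent to } C \text{ at each } P_i\},
\]
and show $\dim I < 26 = \dim \mathscr{C}$; then the projection $I \to \mathscr{C}$ is not dominant, and a general $C$ lies outside its image.

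I would project $I$ onto the base $B := \{(L, P_1, P_2, P_3)\}$, of dimension $5$, and analyze the fiber over $(L, P_1, P_2, P_3)$. After a linear change of coordinates, assume $L = \{x_0 = 0\}$. Since $q^2 + q + 1 \equiv 1 \pmod{p}$, Euler's identity shows that $F(P_i) = 0$ follows from $\partial_1 F(P_i) = \partial_2 F(P_i) = 0$, so the tangency condition at $P_i$ is equivalent to these two equations. Using (\ref{5}), write $\partial_l F = G_l^q$ where $G_l := \sum_{j,k} \alpha_{ljk}\, x_j x_k^q$ and $\alpha_{ljk} := a_{ljk}^{1/q}$; the conditions become $G_1(P_i) = G_2(P_i) = 0$ for $i = 1, 2, 3$. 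On $L$, each $G_l$ restricts to an element of the $4$-dimensional subspace $V := \mathrm{span}(s^{q+1}, s t^q, s^q t, t^{q+1})$, depending on the four coefficients $\{\alpha_{ljk}\}_{j,k \in \{1,2\}}$.

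The crux is to verify that for any three distinct points $P_i = [0:s_i:t_i]$ on $L$, the evaluation vectors $(s_i^{q+1}, s_i t_i^q, s_i^q t_i, t_i^{q+1}) \in V^{*} \cong \Bbbk^4$ are linearly independent. Viewing each such vector as the rank-one $2 \times 2$ matrix $(s_i, t_i)^T (s_i^q, t_i^q)$, a nontrivial linear relation among the three would---by the rank-one structure together with the injectivity of the Frobenius on $\mathbb{P}^1$---force two of the $P_i$ to coincide, a contradiction. Hence the evaluation $V \to \Bbbk^3$ is surjective, and since the conditions on $G_1$ and $G_2$ involve the disjoint coefficient sets $\{\alpha_{1jk}\}$ and $\{\alpha_{2jk}\}$, all $6$ tangency conditions are linearly independent on $F$. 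The fiber therefore has dimension $26 - 6 = 20$, and
\[
\dim I = 5 + 20 = 25 < 26,
\]
as required. The main obstacle is the independence of the evaluation functionals, which ultimately rests on the injectivity of Frobenius.
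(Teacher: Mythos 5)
Your proof is correct and follows essentially the same route as the paper: a dimension count on the incidence variety over the $5$-dimensional space of configurations $(L,P_1,P_2,P_3)$, with the key point being that the six tangency conditions cut the fiber down to codimension $6$, so the total space has dimension $25<26$. The only difference is cosmetic: the paper uses the transitivity of $\mathrm{PGL}_3$ to normalize to $P_0=[0:0:1]$, $P_1=[0:1:0]$, $P_2=[0:1:1]$ and reads off the six independent linear conditions on the $a_{ijk}$ explicitly, whereas you verify the independence for an arbitrary triple of distinct points via the rank-one matrix argument (which is a correct, and slightly more intrinsic, way to see the same codimension).
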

\begin{proof}
We define a variety $\mathscr{X} _1$ by 
\begin{eqnarray*}
\mathscr{X} _1=\left\{ (Q_0, Q_1, Q_2, l)\in \mathbb{P} ^2 \times \mathbb{P} ^2 \times \mathbb{P} ^2 \times (\mathbb{P} ^2 )^{\vee } \middle|
\begin{array}{ll}
Q_0 \in l,\ Q_1 \in l,\ Q_2 \in l\\
\mathrm{and}\ Q_i\neq Q_j\ \mathrm{for}\ i\neq j
\end{array}
\right\} .
\end{eqnarray*}
Then the action of $\mathrm{PGL}_3(k)$ on $\mathscr{X}_1$ is transitive. Let $(P_0, P_1, P_2, l_0)$ be a point of $\mathscr{X} _1$ and let $[x_0 : x_1 : x_2] $ be a homogeneous coordinate system such that 
\[
P_0 = [0 : 0 : 1],\ P_1=[0 : 1 : 0],\ P_2=[0 : 1 : 1]\ \mathrm{and}\ l_0 = \{ x_0 = 0 \}.
\]
Let $C$ be a plane curve in $\mathscr{C}$. We define an algebraic subset $\mathscr{D} _1$ of $\mathscr{C}$ by
\[
\mathscr{D}_1 = \left\{ Y\in \mathscr{C}\ \middle| \begin{array}{ll}
P_0, P_1\ \mathrm{and}\ P_2\ \mathrm{are\ smooth\ points\ of}\ Y,\\ 
\mathrm{and}\  T_{P_0} Y = T_{P_1} Y = T_{P_2} Y = l_0
\end{array}
\right\}.
\]
Then $C\in \mathscr{C}$ is in $\mathscr{D} _1$ if and only if
\begin{gather*}
a_{222} = 0,\ a_{122} = 0,\ a_{111} = 0,\ a_{211} = 0,\ a_{212} + a_{221} = 0,\ a_{112} + a_{121} = 0,\\
a_{022} \neq 0,\ a_{011} \neq 0\ \mathrm{and}\ a_{011} + a_{012} + a_{021} + a_{022} \neq 0.
\end{gather*}
Therefore $\mathscr{D} _1$ is of codimension 6 in $\mathscr{C} $. Since $\dim \mathscr{X} _1 = 5$, we have
\[
\dim\mathscr{X} _1 + \dim\mathscr{D} _1 < \dim\mathscr{C}.
\]
Thus if the curve $C$ is general in $\mathscr{C}$, then $C$ does not have any $m$-ple tangent line for $m\geq 3$.
\end{proof}

Second we prove the following:
\begin{claim}
If $C \in \mathscr{C}$ is general, then $\Gamma _{\mathrm{red}} $ is an immersion at every point of $C$.
\end{claim}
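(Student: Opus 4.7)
The plan is to mirror the incidence-variety and dimension-count argument used in Claim 2. I would introduce
\[
\mathscr{Y} = \{(P, C)\in \mathbb{P}^2 \times \mathscr{C} \colon P\text{ is a smooth point of }C\text{ and }d(\Gamma_{\mathrm{red}}|_C)_P = 0\},
\]
and aim to prove $\dim \mathscr{Y} < \dim \mathscr{C} = 26$. Granted this, the projection $\mathscr{Y}\to\mathscr{C}$ is not dominant, so a general $C\in\mathscr{C}$ has no point at which $\Gamma_{\mathrm{red}}$ fails to be an immersion.

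By Proposition 2.1, the natural $\mathrm{PGL}_3(\Bbbk)$-action on $\mathbb{P}^2$ preserves $\mathscr{C}$, and it is transitive on $\mathbb{P}^2$; hence all fibers of the projection $\mathscr{Y}\to\mathbb{P}^2$ are isomorphic. I would fix $P_0 = [0:0:1]$ and let $\mathscr{D}_2 \subset \mathscr{C}$ denote the fiber over $P_0$. It then suffices to prove that $\mathscr{D}_2$ has codimension at least $3$ in $\mathscr{C}$, because then $\dim \mathscr{Y} \le 2 + 23 = 25 < 26$.

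To describe $\mathscr{D}_2$, I would work in the affine chart $(u,v) = (x_0/x_2, x_1/x_2)$ near $P_0 = (0,0)$. Smoothness of $C$ at $P_0$ translates to $a_{222} = 0$ together with $(a_{022}, a_{122}) \neq 0$, a codimension-$1$ closed condition on $\mathscr{C}$. Using formula (\ref{5}), namely $\Gamma_{\mathrm{red}} = [g_0:g_1:g_2]$ with $g_i = \sum_{j,k}\alpha_{ijk} x_j x_k^q$, and noting that $q\equiv 0\pmod p$ kills the contributions of $x_0^q$ and $x_1^q$ to first order, one has
\[
g_i(u,v,1) = \alpha_{i22} + \alpha_{i02}\,u + \alpha_{i12}\,v + (\text{higher order}).
\]
Parametrizing $C$ near $P_0$ along the tangent direction $(-a_{122}, a_{022})$ (assuming $a_{022}\neq 0$), the derivative of $\Gamma_{\mathrm{red}}|_C$ at $P_0$ is represented by $\bigl(a_{022}\alpha_{i12} - a_{122}\alpha_{i02}\bigr)_{i=0,1,2}$, and ramification at $P_0$ amounts to this vector being proportional to the representative $(\alpha_{022},\alpha_{122},0)$ of $\Gamma_{\mathrm{red}}(P_0)$. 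This is encoded by two polynomial equations on the coefficients: the vanishing of the third coordinate, and the vanishing of the $2\times 2$ determinant formed with $(\alpha_{022},\alpha_{122})$ from the first two coordinates.

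The main step — and the main obstacle — is then to confirm that these two ramification equations, together with $a_{222} = 0$, are genuinely independent on $\mathscr{C}$, so that $\mathscr{D}_2$ really has codimension at least $3$. This should be transparent from the observation that the third-coordinate equation involves only $\alpha_{202}, \alpha_{212}$, the minor equation involves only $\alpha_{002},\alpha_{012},\alpha_{102},\alpha_{112}$, and neither involves $a_{222}$; if needed, one can exhibit explicit coefficients where any one of the three equations fails. Handling the symmetric subcase $a_{122}\neq 0$ in the same way completes the codimension count, after which the dimension inequality and hence the claim follow immediately.
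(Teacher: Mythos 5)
Your proposal is correct and follows essentially the same strategy as the paper: an incidence-variety dimension count combined with an explicit first-order computation of $\Gamma_{\mathrm{red}}$ at a normalized point, leading to the same codimension conclusion (the paper additionally normalizes the tangent line to $x_0=0$, under which your two ramification equations reduce to its conditions $\alpha_{112}=\alpha_{212}=0$). The only cosmetic difference is that the paper computes the differential via a power-series parametrization and the tangent-line formula rather than directly from formula (\ref{5}).
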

\begin{proof}
Let $P_0$ be the point $[0 : 0 : 1]$, and let $l_0$ be the line $\{ x_0 = 0\}$. By linear change of coordinates, we can assume that $P_0 \in C$ and $T_{P_0} C = l_0$. Let $(x, y)$ be affine coordinates such that $[x_0 : x_1 : x_2] = [x : y : 1]$. Then up to multiple constant, the polynomial $F$ can be written as
\begin{equation*}
\begin{split}
F(x, y, 1) = f(x, y) =&\ x+a_{202} x^q + a_{212} y^q + a_{002} x^{q+1} + a_{102} x^q y + a_{012} x y^q \\
 &+ a_{112} y^{q+1} + (\mathrm{terms\ of\ degree} \geq q^2). 
\end{split}
\end{equation*}
Then we have a local parametrization $x = \phi (t),\ y=t$ of $C$ at $P_0$ such that the power series $\phi (t) $ is written as 
\[
\phi (t) = -a_{212} t^q -a_{112} t^{q+1} +a_{012} a_{212} t^{2q} + \cdots.
\]
We consider the Gauss map given by (\ref{8}).
Let $(\eta , \zeta )$ be the affine coordinates of $(\mathbb{P} ^2)^{\vee}$ with the origin $l_0 \in (\mathbb{P} ^2) ^{\vee}$ such that the point $(\eta , \zeta )$ corresponds to the line $x+ \eta y + \zeta = 0$.
Then the tangent line of $C$ at $P_t = [\phi (t) : t : 1]$ is 
\[
\frac{\partial f}{\partial x} (P_t) x + \frac{\partial f}{\partial y} (P_t) y - \frac{\partial f}{\partial x} (P_t) \phi (t) -\frac{\partial f}{\partial y} (P_t) t = 0
\]
Therefore the Gauss map locally around $P_0$ is written as
\begin{equation*}
\begin{split}
\Gamma (P_t) &= \left(\frac{f_y (P_t) }{f_x (P_t) } , -\frac{f_y (P_t) }{f_x (P_t) } t - \phi (t) \right) \\
&=\left( -\frac{d\phi}{dt} (t) , t\frac{d\phi}{dt}(t) -\phi (t) \right). 
\end{split}
\end{equation*}
Since
\[
-\frac{d\phi}{dt}(t) = a_{112} t ^q + (\mathrm{terms\ of\ degree}>q)
\]
and
\[
 t\frac{d\phi}{dt} (t) -\phi (t) = a_{212} t^q + (\mathrm{terms\ of\ degree}>q),
\]
the reduced Gauss map $\Gamma _{\mathrm{red}} $ locally around $P_0$ is 
\begin{equation}
t \mapsto (\alpha _{112} t + (\mathrm{terms\ of\ degree}>1) , \alpha _{212} t + (\mathrm{terms\ of\ degree}>1) ) \label{3},
\end{equation}
where $\alpha _{ijk} = a_{ijk} ^{1/q}$. The reduced Gauss map $\Gamma _{\mathrm{red}} $ is not smooth at the point $P_0$ if and only if $\alpha _{112} = \alpha _{212} = 0$. Since the codimension of the subset
\[
\{ C \in \mathscr{C} \ |\ \alpha _{112} = \alpha _{212} =0 \}
\]
is 2 in $\mathscr{C}$, the reduced Gauss map $\Gamma _{\mathrm{red}} $ is locally immersion at every point of a general member $C$ of $\mathscr{C}$.
\end{proof}

Suppose that $C\in\mathscr{C}$ is general. We prove that the singular points of the dual curve $C^{\vee } $ are only ordinaly nodes. Let $P_0$ and $P_1$ be the points in the proof of claim 1, and let $l_0$ be the line $\{ x_0=0\}$. Suppose that $P_0$ and $P_1$ are smooth points of $C$ and $T_{P_0} C =T_{P_1} C = l_0$. Let $(x', y')$ be affine coordinates such that $[x_0 : x_1 : x_2] = [x' : 1 : y']$. Similar to the proof of the claim 2, up to multiple constant, the polynomial $F$ can be written as
\begin{equation*}
\begin{split}
F(x', 1, y') = g(x', y') =&\ x'+a_{101} x'^q + a_{121} y'^q + a_{001} x'^{q+1} + a_{201} x'^q y' + a_{021} x' y'^q  \\
 &+ a_{221} y'^{q+1} + (\mathrm{terms\ of\ degree} \geq q^2 ). 
\end{split}
\end{equation*}
Then we have a local parametrization $x' = \psi (t),\ y'=t$, of $C$ at $P_0$ such that the power series $\psi (t) $ is written as 
\[
\psi (t) = -a_{121} t^q -a_{221} t^{q+1} +a_{021} a_{121} t^{2q} + \cdots.
\]
Let $(\eta, \zeta )$ be the affine coordinates of $(\mathbb{P} ^2)^{\vee}$ with the origin $l_0 \in (\mathbb{P} ^2) ^{\vee}$ such that the point $(\eta , \zeta )$ corresponds to the line $x'+ \eta y' + \zeta = 0$. The tangent line of $C$ at $P'_t = [\psi (t) : 1 : t] $ is
\[
\frac{\partial g}{\partial x'} (P'_t) x' + \frac{\partial g}{\partial y'} (P'_t) y' - \frac{\partial g}{\partial x'} (P'_t) \phi (t) -\frac{\partial g}{\partial y'} (P'_t) t = 0.
\]
Therefore the Gauss map $\Gamma $ locally around $P_1$ is written as
\begin{equation*}
\begin{split}
\Gamma (P_t') &= \left(\frac{g_{y'} (P'_t) }{g_{x'} (P'_t) } , -\frac{g_{y'} (P'_t) }{g_{x'} (P'_t) } t - \psi (t) \right) \\
&=\left( -\frac{d\psi}{dt}(t) , t\frac{d\psi}{dt} (t) -\psi (t) \right). 
\end{split}
\end{equation*}
Since
\[
-\frac{d\psi}{dt} (t) = a_{221} t ^q + (\mathrm{terms\ of\ degree}>q)
\]
and
\[
 t\frac{d\psi}{dt} (t) -\psi (t) = a_{121} t^q + (\mathrm{terms\ of\ degree}>q),
\]
we describe the reduced Gauss map
\begin{equation}
t \mapsto (\alpha _{221} t + (\mathrm{terms\ of\ degree}>1) ,\ \alpha _{121} t + (\mathrm{terms\ of\ degree}>1) ) \label{4}
\end{equation}
 locally around $P_1$. We define a variety $\mathscr{X}_2$ by 
\[
\mathscr{X}_2 = \{ (Q_0, Q_1, l) \in \mathbb{P}^2 \times \mathbb{P}^2 \times (\mathbb{P}^2)^{\vee} \ |\ Q_0 \in l, Q_1 \in l\ \mathrm{and}\ Q_0 \neq Q_1\}.
\]
Then the action of $\mathrm{PGL}_3(k)$ on $\mathscr{X}_2$ is transitive and $\dim\mathscr{X}_2 = 4$. Let $(P_0, P_1, l_0)$ be the point of $\mathscr{X}_2$ such that $P_0 = [0 : 0 : 1]$, $P_1 = [0 : 1 : 0]$ and $l_0 = \{x_0=0\}$. We define a subset $ \mathscr{D}_2$ of $\mathscr{C}$ by 
\[
\mathscr{D} _2 = \{ Y\in \mathscr{C}\ |\ P_0\ \mathrm{and}\ P_1\ \mathrm{are\ smooth\ points\ of}\ Y,\ \mathrm{and}\ T_{P_0} Y = T_{P_1} Y = l_0\}
\]
Then $C\in \mathscr{D}_2$ if and only if
\[
a_{222} = 0,\ a_{122} = 0,\ a_{111} = 0,\ a_{211} = 0,\ a_{022} \neq 0,\ a_{011} \neq 0.
\]
Thus the codimension of $\mathscr{D}_2$ is 4. For $C\in \mathscr{D}_2$,  by (\ref{3}) and (\ref{4}), the singularities of $C^{\vee}$ at the point $l_0$ is not a ordinary node if and only if
\[
\begin{vmatrix}
\alpha_{112} & \alpha_{212} \\
\alpha_{211} & \alpha_{121} \\
\end{vmatrix}
=0.
\]
We define a subset $\mathscr{D}_2'$ of $\mathscr{C}$ by
\[
\mathscr{D}_2' = \left\{ Y\in \mathscr{C}\ \middle| \begin{array}{ll}
P_0\ \mathrm{and}\ P_1\ \mathrm{are\ smooth\ points\ of}\ Y,\\
T_{P_0} Y = T_{P_1} Y = l_0,\ \mathrm{and}\ Y^{\vee}\ \mathrm{does\ not\ have\ ordinary\ node\ at}\ l_0
\end{array}
\right\}.
\]
Since the codimension of $\mathscr{D}_2'$ is 5, 
\[
\dim\mathscr{D}_2' + \dim\mathscr{X}_2 < \dim\mathscr{C}.
\]
Therefore, since $a_{ijk} $ are general, the dual curve $C^{\vee }$ has only ordinary nodes as its singularities.

\section{Proof of the second half of Theorem 1}
\subsection{Number of the ordinary nodes of $C^{\vee}$}
Let $g$ and $g^{\vee }$ be the genera of a general curve $C\in \mathscr{C}$ and its dual curve $C^{\vee}$, respectively. Let $\delta$ be the number of the ordinary nodes of $C^{\vee}$. Then
\[
g=\frac{(d-1)(d-2)}{2} = \frac{\{ (q^2+q+1)-1\}\{ (q^2+q+1)-2\} }{2} 
\]
and
\begin{equation*}
\begin{split}
g^{\vee} =& \frac{(d^{\vee}-1)(d^{\vee}-2)}{2} - \delta \\
=& \frac{\{(q^2+q+1)(q+1)-1\} \{(q^2+q+1)(q+1)-2\}}{2} - \delta ,
\end{split}
\end{equation*}
where $d$ and $d^{\vee}$ are the degree of $C$ and $C^{\vee}$, respectively, because, by the previous section, $C^{\vee}$ has only ordinary nodes. By claim 2 of section 3, the reduced Gauss map $\Gamma _{\mathrm{red}} $ is birational onto its image. Thus $g = g^{\vee}$ and hence we have
\begin{equation*}
\begin{split}
\delta =& \frac{\{(q^2+q+1)(q+1)-1\} \{(q^2+q+1)(q+1)-2\}}{2} \\
&- \frac{\{ (q^2+q+1)-1\}\{ (q^2+q+1)-2\} }{2} \\
=& \frac{q(q^2+q+1)(q^3+3q^2+3q-1)}{2}
\end{split}
\end{equation*}

\subsection{Number of the flexes}
We denote by $\mathrm{mult}_{P} (D_1, D_2)$ the intersection multiplicity of projective plane curves $D_1$ and $D_2$ at a point $P \in D_1 \cap D_2$.
\begin{lemma}\label{y}
We suppose that $C \in \mathscr{C}$ is a general plane curve in $\mathscr{C}$. If the multiplicity $\mathrm{mult}_u (T_u C, C)$ is more than $q$ at $u\in C$, then the multiplicity $\mathrm{mult}_u (T_u C, C)$ is $q+1$ at $u\in C$ and all other intersection points of $T_uC$ and C are not tangent point.
\end{lemma}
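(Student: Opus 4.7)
The plan is to follow the strategy used for Claim 1 in Section 3: set up two incidence varieties for the two ``bad'' configurations ruled out by the lemma, and in each case compare dimensions with $\dim\mathscr{C}=26$.

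First I will show that no point can have intersection multiplicity strictly greater than $q+1$. Fixing $(P_0, l_0) = ([0:0:1], \{x_0=0\})$ and using the affine coordinates $(x,y)=(x_0/x_2, x_1/x_2)$, I would reuse the local parametrization
\[
\phi(y) = -a_{212}y^q - a_{112}y^{q+1} + a_{012}a_{212}y^{2q} + \cdots
\]
from the proof of Claim 2. The pure $y$-monomials of $f(x,y)=F(x,y,1)$ occur only in the degrees $\{0,1,q,q+1,q^2,q^2+1,q^2+q,q^2+q+1\}$ (the subset sums of $\{1,q,q^2\}$), and every term of the recursion defining $\phi$ that involves $\phi$ itself strictly raises the order. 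Consequently the order of $\phi(y)$ lies in $\{q,q+1\}\cup[q^2,\infty)$, and $\mathrm{mult}_{P_0}(T_{P_0}C, C) > q+1$ is equivalent to $a_{212}=a_{112}=0$. Combined with $a_{222}=a_{122}=0$ (forcing $P_0\in C$ and $T_{P_0}C=l_0$), this is four linear conditions, so the locus of such $C$ in $\mathscr{C}$ has codimension $4$. Since the incidence variety $\{(P,l)\in\mathbb{P}^2\times(\mathbb{P}^2)^\vee:P\in l\}$ has dimension $3$ and $\mathrm{PGL}_3(\Bbbk)$ acts transitively on it, the total incidence has dimension at most $22+3=25<26$. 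A general $C$ therefore has no such point.

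Second, I would rule out the possibility that a flex tangent is also tangent to $C$ at some other point. Reusing $\mathscr{X}_2$ from Claim 1 (dimension $4$) with the standard triple $(P_0,P_1,l_0)=([0:0:1],[0:1:0],\{x_0=0\})$, the conditions that $P_0$ and $P_1$ are smooth points of $C$ with $T_{P_0}C=T_{P_1}C=l_0$ are $a_{222}=a_{122}=a_{111}=a_{211}=0$, exactly as computed for $\mathscr{D}_2$. Adding the single further condition $a_{212}=0$ that $P_0$ be a flex gives a codimension $5$ locus in $\mathscr{C}$, and the total incidence has dimension at most $21+4=25<26$. Hence no such configuration arises on a general $C$, which is the second assertion of the lemma.

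The main obstacle is the first step: justifying that the order of $\phi(y)$ cannot fall in the ``gap'' $\{q+2,q+3,\ldots,q^2-1\}$. Once this is established by inspecting which degrees of pure $y$-monomials appear in $f(x,y)$ and by observing that every $\phi$-containing term in the recursion raises the order, both assertions of the lemma reduce to direct dimension counts entirely analogous to those in Section 3.
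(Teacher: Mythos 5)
Your proposal is correct and follows essentially the same route as the paper: the paper likewise introduces the incidence variety $\mathscr{X}_0$ of dimension $3$ and the codimension-$4$ locus $\{a_{222}=a_{122}=a_{212}=a_{112}=0\}$ for the first assertion, and the codimension-$5$ locus $\{a_{222}=a_{122}=a_{111}=a_{211}=a_{212}=0\}$ over $\mathscr{X}_2$ for the second, concluding both times from $\dim\mathscr{X}_i+\dim\widetilde{\mathscr{D}}_i<\dim\mathscr{C}=26$. The only difference is that your worry about the order of $\phi$ landing in the gap $\{q+2,\dots,q^2-1\}$ is unnecessary: the lemma only needs the implication that $\mathrm{mult}_{P_0}(T_{P_0}C,C)>q+1$ forces $a_{212}=a_{112}=0$, which is immediate from the leading terms $-a_{212}t^{q}-a_{112}t^{q+1}$ of $\phi$.
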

\begin{proof}
We use the same notation as in Section 3. We define a variety $\mathscr{X}_0$ by
\[
\mathscr{X}_0 = \{ (Q, l) \in \mathbb{P} ^2 \times (\mathbb{P} ^2 ) ^{\vee }\ |\ Q\in l \} .
\]
Then the action of $\mathrm{PGL}_3(k)$ on $\mathscr{X}_0$ is transitive and $\mathrm{dim} \mathscr{X}_0 = 3$. We recall that $[x_0 : x_1 : x_2 ]$ are homogeneous coordinates, $P_0 = [ 0 : 0 : 1]$, $P_1 = [ 0 : 1 : 0 ]$ and $l_0 = \{ x_0 = 0\} $. We define two subsets $\mathscr{D}_0 $ and $\widetilde{\mathscr{D} } _0$ of $\mathscr{C}$ by
\[
\mathscr{D} _0 = \left\{ Y\in \mathscr{C} \middle|\ \begin{array}{ll}
P_0\ \mathrm{is\ the\ smooth\ point\ of}\ Y,\ T_{P_0} Y = l_0\\
\mathrm{and}\ \mathrm{mult}_{P_0} (T_{P_0} Y, Y) = q+1
\end{array}
\right\}
\]
and
\[
\widetilde{\mathscr{D} } _0 = \left\{ Y\in \mathscr{C} \middle|\ \begin{array}{ll}
P_0\ \mathrm{is\ the\ smooth\ point\ of}\ Y,\ T_{P_0} Y = l_0\\
\mathrm{and}\ \mathrm{mult}_{P_0} (T_{P_0} Y, Y) > q+1
\end{array}
\right\} .
\]
Then the curve $C \in \mathscr{D}_0$ if and only if
\[
a_{222} = 0,\ a_{122} = 0,\ a_{212} = 0,\ a_{112} \neq 0\ \mathrm{and}\ a_{022}\neq 0,
\]
and $C\in \widetilde{\mathscr{D} } _0$ if and only if
\[
a_{222} = 0,\ a_{122} = 0,\ a_{212} = 0,\ a_{112} = 0\ \mathrm{and}\ a_{022}\neq 0.
\]
Therefore the codimension of $\mathscr{D} _0$ is 3 and that of $\widetilde{\mathscr{D} } _0$ is more than 3 in $\mathscr{C}$. Thus we have
\[
\mathrm{dim} \mathscr{X}_0 + \mathrm{dim} \widetilde{\mathscr{D} } _0 < \mathrm{dim} \mathscr{C}.
\]
We proved the first half of the lemma. We define a subset $\widetilde{\mathscr{D} } _2$ of $\mathscr{C}$ by
\[
\widetilde{\mathscr{D} } _2 = \left\{ Y\in \mathscr{C} \middle|\ \begin{array}{ll}
P_0\ \mathrm{and}\ P_1\ \mathrm{are\ the\ smooth\ points\ of}\ Y,\ T_{P_0} Y = l_0,\\
T_{P_1} Y= l_0\ \mathrm{and}\ \mathrm{mult}_{P_0} (T_{P_0} Y, Y) = q+1
\end{array}
\right\} .
\]
Then the curve $C\in \widetilde{\mathscr{D} } _2$ if and only if
\[
a_{222} = 0,\ a_{122} = 0,\ a_{111} = 0,\ a_{211} = 0,\ a_{212} = 0,\ a_{112} \neq 0,\ a_{022} \neq 0,\ a_{011} \neq 0.
\]
Therefore codimension of $\widetilde{\mathscr{D} } _2$ is 5, and we recall $\mathrm{dim} \mathscr{X}_2 = 4$. Thus, since we have
\[
\mathrm{dim} \mathscr{X}_2 + \mathrm{dim} \widetilde{\mathscr{D} } _2 < \mathrm{dim} \mathscr{C},
\]
the second half of the lemma is proved.
\end{proof}
Let $g$ be the genus of a general curve $C\in \mathscr{C} $. 
We use the notion and notation about the correspondence of curves introduced in \cite[Chap. 2, Section 5]{MR1288523}. Let $T : C \to C$ be correspondence defined by $T(u) = T_{u} C.C-qu $, $D\subset C\times C$ its curve of correspondence, i.e. $D =\overline{ \{ (u,v)\ |\ u\neq v,\ v \in T_u C \} }$. 
Then the degree of $T$ is
\[
\deg T = (q^2+q+1)-q = q^2+1.
\]
Let $\pi _2 : C \times C \to C$ be the projection on second factor. In order to find the degree of $T^{-1}$, we have to caluculate the number of tangent lines to $C$, (counted with the intersection multiplicities of $D$ and $\pi_2 ^{-1} (v)$) other than $T_v C$ passing throght a general point $v\in C$. We consider the projection $\pi _v : C\to \mathbb{P}^1$ from the center $v\in C$ onto a line. Let $\Omega _{C / \mathbb{P}^1 }$ be the sheaf of the relative differential of $C$ over $\mathbb{P} ^1$. By Hurwitz-formula \cite[Chap. I\hspace{-.1em}V, Corollary 2.4]{MR0463157},
\[
2g-2 = -2(q^2+q) + \deg R ,
\]
where the divisor $R$ is the ramification divisor of $\pi _v$ i.e. $R = \sum_{u\in C} \mathrm{length} (\Omega _{C / \mathbb{P}^1 })_u u$. Hence
\[
\deg R = q^4+2q^3+2q^2+q-2.
\]
Moreover, the length of $(\Omega _{C / \mathbb{P}^1 })_v$ is $q-2$. Hence, we have
\begin{equation*}
\begin{split}
\deg T^{-1} &= (q^4+2q^3+2q^2+q-2)-(q-2) \\
&= q^4+2q^3+2q^2.
\end{split}
\end{equation*}

\begin{lemma}
Let $\pi _1,\ \pi _2 : C\times C\to C$ be the projections on first and second factors, respectively. The divisor $D$ on $C\times C$ is algebraically equivalent to 
\[
(q^4+2q^3+2q^2+q)E_u +(q^2+q+1)F_v -q\Delta, 
\]
where $E_u = \pi _1 ^{-1} (u),\ F_v = \pi _2 ^{-1} (v)$ and $\Delta \subset C\times C$ is the diagonal.
\end{lemma}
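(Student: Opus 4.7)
The plan is to exploit the fact that the correspondence $T$ has constant valence in the classical sense. For every $u\in C$ one has
\[
T(u)+qu \;=\; T_uC\cdot C,
\]
and the right-hand side is a plane section of $C$, hence its class in $\mathrm{Pic}(C)$ equals the hyperplane class $\mathcal{O}_C(1)$ and in particular is independent of $u$. This means that the restriction of $\mathcal{O}_{C\times C}(D+q\Delta)$ to every fibre $E_u$ of $\pi_1$ lies in one and the same class in $\mathrm{Pic}(C)$.

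By the seesaw principle, I would then conclude that $D+q\Delta$ is linearly (hence algebraically) equivalent to a divisor pulled back from the two factors, i.e.\ to one of the form $aE_u+bF_v$ for some integers $a$ and $b$. This is the standard decomposition theorem for correspondences of valence $-q$ used in \cite[Chap.~2, Sect.~5]{MR1288523}, so the reference supplies the step directly. I regard this invocation as the only non-routine step; once it is in place, the remainder of the proof is bookkeeping.

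The constants are pinned down by intersecting with fibres, using $E_u\cdot E_u=F_v\cdot F_v=0$, $E_u\cdot F_v=1$, and $E_u\cdot\Delta=F_v\cdot\Delta=1$. Intersecting $D+q\Delta\equiv aE_u+bF_v$ with $E_u$ gives
\[
b \;=\; (D+q\Delta)\cdot E_u \;=\; \deg T + q \;=\; (q^2+1)+q \;=\; q^2+q+1,
\]
and intersecting with $F_v$ gives
\[
a \;=\; (D+q\Delta)\cdot F_v \;=\; \deg T^{-1} + q \;=\; (q^4+2q^3+2q^2)+q \;=\; q^4+2q^3+2q^2+q,
\]
where $\deg T$ and $\deg T^{-1}$ are the values already computed above. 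Rearranging yields the stated algebraic equivalence for $D$.
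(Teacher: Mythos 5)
Your proposal is correct and follows essentially the same route as the paper: both arguments rest on the observation that $T(u)+qu$ is a plane section of $C$ and hence constant in $\mathrm{Pic}(C)$, invoke the seesaw principle to write $D+q\Delta$ as a combination $aE_u+bF_v$ up to algebraic equivalence, and then determine $a$ and $b$ from $\deg T^{-1}+q$ and $\deg T+q$. Your computation of the coefficients by intersecting with the fibres is just a reformulation of the paper's computation of the degrees of $T(u_0)+qu_0$ and $T^{-1}(v_0)+qv_0$, so there is nothing substantive to add.
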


\begin{proof}
For some $u_0,\ v_0\in C$, we write 
\[
T(u_0) + qu_0 = \sum b_i v_i
\]
and
\[
T^{-1} (v_0)+qv_0 = \sum a_iu_i.
\]
Let $L$ be the line bundle
\[
L = D-\sum a_i E_{u_i} - \sum b_i F_{v_i} + q\Delta.
\]
For any $x\in C$, the restriction of $L$ to $E_x$ is trivial because the divisor $T(x) + qx$ is linearly equivalent to $T(u_0) + qu_0$. By \cite[Chap. III, Exercise 12.4]{MR0463157}, there is a line bundle $M$ on $C$ such that $L \cong \pi_1 ^{\ast} (M)$. Since the restriction of $L$ to $F_{v_0}$ is trivial, the line bundle $L$ is also trivial. Thus $D$ is linearly equivalent to 
\[
\sum a_i E_{u_i} + \sum b_i F_{v_i} - q\Delta.
\]
For any  $u,\ v\in C$, the divisors $E_{u_i}$ (resp. $F_{v_i} $) are algebraically equivalent to $E_u$ (resp. $F_v$). Note that the degrees of $T(u_0) + qu_0$ and $T^{-1} (v_0) + qv_0$ are
\[
\deg (T(u_0) + qu_0 ) = q^2+q+1
\]
and
\[
\deg (T^{-1} (v_0)+qv_0 ) = q^4+2q^3+2q^2+q, 
\]
and hence the result is proved.
\end{proof}
\begin{lemma}\label{w}
If $C\in\mathscr{C}$ is a general plane curve in $\mathscr{C}$, then $D$ and $\Delta$ intersect transversally at any point $(u, v) \in D \cap \Delta $.
\end{lemma}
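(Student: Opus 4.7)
The plan is to work locally at each point of $D \cap \Delta$. Since $\Delta$ is the diagonal, every such point has the form $(u,u)$, and $(u,u)$ belongs to $D$ exactly when $u$ is a limit along $C$ of a tangent point whose ``extra'' intersection also approaches $u$; by Lemma~\ref{y}, for general $C$ this happens precisely when $u$ is a flex of $C$, with $\mathrm{mult}_u(T_u C, C) = q+1$ exactly. I will show that $D$ is smooth at each such $(u, u)$ and that its tangent line is distinct from that of $\Delta$.

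Fix a flex $u$. By a projective change of coordinates take $u = P_0 = [0:0:1]$ and $T_u C = l_0 = \{x_0 = 0\}$; in the notation of Section~3 the flex condition is $a_{212} = 0$ and $a_{112} \neq 0$, and the local parametrization reads $x = \phi(t)$, $y = t$ with
\[
\phi(t) = -a_{112}\, t^{q+1} + a_{012} a_{112}\, t^{2q+1} + (\text{higher-order terms}).
\]
Using $(t_1, t_2)$ as local coordinates on $C \times C$ near $(u, u)$, the diagonal is $\Delta = \{t_1 = t_2\}$, and the condition that $P_{t_2}$ lies on the tangent line $T_{P_{t_1}} C$ reads
\[
G(t_1, t_2) := \phi(t_2) - \phi(t_1) - \phi'(t_1)(t_2 - t_1) = 0.
\]
Because every tangent of $C$ meets $C$ with multiplicity at least $q$ at the tangent point, $G$ is divisible by $(t_2 - t_1)^q$, and $\widetilde G := G/(t_2 - t_1)^q$ is a local defining equation for $D$ near $(u, u)$.

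The crux is to read off the linear part of $\widetilde G$ at $(0, 0)$. Setting $s = t_1$ and $w = t_2 - t_1$ and expanding $\phi(s + w)$ with Hasse derivatives, $G(s, s+w) = \sum_{k \geq q} D^{(k)} \phi(s)\, w^k$, so $\widetilde G(s, w) = D^{(q)}\phi(s) + D^{(q+1)}\phi(s)\, w + \cdots$. From $D^{(q)}(t^{q+1}) = (q+1)\, t \equiv t$ and $D^{(q+1)}(t^{q+1}) = 1$ in characteristic $p$, one obtains $D^{(q)}\phi(s) = -a_{112}\, s + O(s^{q+1})$ and $D^{(q+1)}\phi(0) = -a_{112}$, so
\[
\widetilde G(s, w) = -a_{112}(s + w) + (\text{higher-order terms}).
\]
Substituting $s = t_1$, $w = t_2 - t_1$ yields linear part $-a_{112}\, t_2$. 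Since $a_{112} \neq 0$, $D$ is smooth at $(u, u)$ with tangent line $\{t_2 = 0\}$, which is distinct from the tangent line $\{t_1 = t_2\}$ of $\Delta$; hence $D$ and $\Delta$ meet transversally at $(u, u)$. The main point to verify is the coincidence of coefficients producing $-a_{112}(s + w) = -a_{112}\, t_2$, which in turn rests on the single identity $(q+1) \equiv 1 \pmod{p}$ applied to the leading flex-term $-a_{112}\, t^{q+1}$ of $\phi$.
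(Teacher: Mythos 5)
Your proof is correct and follows essentially the same route as the paper's: both reduce to a local computation at a flex $u$, use the parametrization $x=\phi(t)$, $y=t$, factor the tangency condition by $(t_2-t_1)^q$, and show the resulting branch of $D$ at $(u,u)$ has tangent line $\{t_2=0\}$, transverse to the diagonal. Your Hasse-derivative extraction of the linear part $-a_{112}\,t_2$ of $\widetilde G$ is consistent with the paper's explicit solution $t_2=-(a_{221}/a_{112})\,t_1^{\,q}+\cdots$ of the same equation.
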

\begin{proof}
We use the same notations as in Section 3 and Lemma \ref{y}. We recall that $[x_0 : x_1 : x_2]$ is homogeneous coordinates, $P_0 = [0 : 0 : 1]$, $l_0 = \{ x_0 = 0 \}$ and
\[
\mathscr{D} _0 = \left\{ Y\in \mathscr{C} \middle|\ \begin{array}{ll}
P_0\ \mathrm{is\ the\ smooth\ point\ of}\ Y,\ T_{P_0} Y = l_0\\
\mathrm{and}\ \mathrm{mult}_{P_0} (T_{P_0} Y, Y) = q+1
\end{array}
\right\}.
\]
By change of coordinates, we assume that $C \in \mathscr{D}_0$. Let $(x, y)$ be affine coordinates such that $[x_0 : x_1 : x_2] = [x : y : 1]$. Then up to multiple constant, the polynomial $F$ can be written as
\begin{equation*}
\begin{split}
F(x, y, 1) =\ & x + a_{202} x^q + a_{002} x^{q+1} + a_{102} x^q y + a_{012} xy^q + a_{112} y^{q+1} \\ 
& + a_{220}x^{q^2} + a_{221} y^{q^2} + (\mathrm{terms\ of\ degree} > q^2).
\end{split}
\end{equation*}
Then we have a local parametrization $x=\phi_1 (t)$, $y = t$ of $C$ at $P_0$ such that the power series $\phi_1 (t)$ is written as
\[
\phi_1 (t) = -a_{112} t^{q+1} +a_{012} a_{112}t^{2q+1} + \cdots - a_{221} t^{q^2} + (\mathrm{terms\ of\ degree} > q^2).
\]
Let $(P_{t_1} , P_{t_2})$ be a point of $D$ in a small neighborhood of $(P_0, P_0)$ such that 
\[P_{t_1} = [\phi_1 (t_1) : t_1 : 1]\ \mathrm{and}\ P_{t_2} = [\phi_1 (t_2) : t_2 : 1].
\]
The tangent line of $C$ at $P_{t_1}$ is 
\[
x = \frac{d\phi_1}{dt} (t_1)y-t_1\frac{d\phi_1}{dt} (t_1) + \phi_1(t_1),
\]
and hence
\begin{equation*}
\begin{split}
x  = &\ (-a_{112} t_1^q +a_{012}a_{112}t_1^{2q} + (\mathrm{terms\ of\ degree} >2q ))y \\
 & + (-a_{221}t_1^{q^2} + (\mathrm{terms\ of\ degree}>q^2 )).
\end{split}
\end{equation*}
Therefore $t_2$ is the solution of the equation
\begin{equation}\label{z}
\frac{d\phi_1}{dt} (t_1)y-t_1\frac{d\phi_1}{dt} (t_1) + \phi_1(t_1) - \phi_1(y) = 0
\end{equation}
for $y$ that is not $t_1$ and approaches to 0 when $t_1$ tends to 0. We can express the left hand side of (\ref{z}) as
\begin{equation*}
\begin{split}
& (-a_{112} t_1^q +a_{012}a_{112}t_1^{2q} + (\mathrm{terms\ of\ degree} >2q\ \mathrm{in}\ t_1))y \\
&+ (-a_{221}t_1^{q^2} + (\mathrm{terms\ of\ degree}>q^2\ \mathrm{in}\ t_1)) \\
&+ a_{112} y^{q+1} -a_{012} a_{112}y^{2q+1} + \cdots + a_{221} y^{q^2} + (\mathrm{terms\ of\ degree} > q^2\ \mathrm{in}\ y) \\
= &\ (y-t_1)^qf_{t_1}(y),
\end{split}
\end{equation*}
where the power series $f_{t_1}(y)$ is written as
\[
f_{t_1}(y) = a_{112}y + a_{221} t_1^q + a_{221} y^q + \cdots.
\]
Since $C\in\mathscr{D}_0$, $a_{112} \neq 0$. Thus a solution of $f_{t_1}(y) = 0$ is
\[
y = -\frac{a_{221}}{a_{112}} t_1^q + (\mathrm{terms\ of\ degree} > q).
\]
Therefore we have 
\[
t_2 = -\frac{a_{221}}{a_{112}} t_1^q + (\mathrm{terms\ of\ degree} > q).
\]
If $(P_{t_1} , P_{t_2})$ is a point in $\Delta $, then $t_1 = t_2$. Therefore, if $(P_{t_1} , P_{t_2}) \in D \cap \Delta$, then 
\[
t_1 = -\frac{a_{221}}{a_{112}} t_1^q + (\mathrm{terms\ of\ degree} > q).
\]
Thus $D$ and $\Delta$ intersect transversally at $(P_0, P_0) \in D \cap \Delta $.

\end{proof}
By Lemma \ref{w}, the number of the flexes is equal to the intersection number $(D\cdot \Delta)$ for a general member $C$ of $\mathscr{C}$. Since the self-intersection number of $\Delta$ is $2-2g$, the intersection number $(D\cdot \Delta )$ is
\begin{equation*}
\begin{split}
(D\cdot \Delta )  &= (\{ (q^4+2q^3+2q^2+q)E_u +(q^2+q+1)F_v -q\Delta \} \cdot \Delta ) \\
&= q^4+2q^3+3q^2+2q+1-q(2-2g ) \\
&= q^5+3q^4+2q^3+2q^2+1\\
&= (q^3+2q^2-q+1)(q^2+q+1).
\end{split}
\end{equation*}
\section{Fermat curve}
For any formal power series $f \in \Bbbk [[x, y]]$, we define the {\em Milnor number} $\mu (f)$ by
\[
\mu (f) = \dim _{\Bbbk } \Bbbk [[x, y]] / \left(\frac{\partial f}{\partial x}, \frac{\partial f}{\partial y} \right).
\]
Calculation method of the Milnor number for a formal power series in characteristic zero is well known. (For example, see \cite{MR2107253}.) However, in positive characteristic, the calculation method and result of the Milnor number differ from the characteristic-zero case in general. In the case of the following lemma, however, the Milnor number is the same as the characteristic-zero case.

\begin{lemma}
Let $a$ and $b$ be elements in $\Bbbk\setminus \{ 0\}$, and let $f\in \Bbbk[[x,y]]$ be a formal power series defined by 
\[
f(x, y) = ax^{\alpha } + by^{\beta } + \sum_{\alpha \beta < \alpha s + \beta r} c_{r, s} x^r  y^s,
\]
where $\alpha $ and $\beta $ satisfy $p\mathrel{\not | } \alpha$, $p\mathrel{\not | } \beta$ and are relatively prime. Then the Milnor number $\mu (f) $ of $f$ is
\[
\mu (f) = (\alpha -1)(\beta -1).
\]
\end{lemma}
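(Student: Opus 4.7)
My strategy is to reduce the computation to that of the leading piece $f_0(x,y) := ax^\alpha + by^\beta$ via a weighted-degree argument. Assign weights $w(x) = \beta$ and $w(y) = \alpha$, so that $f_0$ is weighted-homogeneous of weight $\alpha\beta$ and, by the standing assumption $\alpha\beta < \alpha s + \beta r$, every monomial of $f - f_0$ has weight strictly greater than $\alpha\beta$. Let $F^{\geq m}\subset \Bbbk[[x,y]]$ denote the ideal generated by monomials of weight $\geq m$; this weighted filtration is topologically equivalent to the $\mathfrak{m}$-adic one, so $\Bbbk[[x,y]]$ is complete in it.

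First I differentiate. Since $a,b\neq 0$ and $p\nmid\alpha$, $p\nmid\beta$, I obtain
\[
\frac{\partial f}{\partial x} = a\alpha\, x^{\alpha-1} + R_1,\qquad \frac{\partial f}{\partial y} = b\beta\, y^{\beta-1} + R_2,
\]
with $a\alpha, b\beta$ nonzero scalars and $R_1\in F^{\geq(\alpha-1)\beta+1}$, $R_2\in F^{\geq(\beta-1)\alpha+1}$. Thus the two generators of the Jacobian ideal $J(f):=(\partial f/\partial x,\ \partial f/\partial y)$ have weighted leading forms equal to nonzero scalar multiples of the coprime monomials $x^{\alpha-1}$ and $y^{\beta-1}$.

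The crux of the argument is to show that these two elements form a standard basis of $J(f)$ with respect to the weighted filtration, i.e.,
\[
\mathrm{in}_w J(f) = (x^{\alpha-1},\ y^{\beta-1}).
\]
The inclusion $\supseteq$ is immediate. For $\subseteq$ I use the coprimality of $x^{\alpha-1}$ and $y^{\beta-1}$: the unique S-element $b\beta\,y^{\beta-1}(\partial f/\partial x) - a\alpha\,x^{\alpha-1}(\partial f/\partial y)$ has its two leading contributions cancel exactly, and the Grauert--Hironaka division theorem in $\Bbbk[[x,y]]$ then reduces it to zero against the original generators, which is Buchberger's criterion in this local setting. Granted this, the associated graded $\mathrm{gr}_w\,\Bbbk[[x,y]]/J(f) = \Bbbk[x,y]/(x^{\alpha-1},y^{\beta-1})$ has $\Bbbk$-dimension $(\alpha-1)(\beta-1)$, and the monomials $x^iy^j$ with $0\leq i\leq\alpha-2$, $0\leq j\leq\beta-2$ lift to a basis of $\Bbbk[[x,y]]/J(f)$, giving $\mu(f) = (\alpha-1)(\beta-1)$.

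The main obstacle is this standard-basis identity; the hypothesis $p\nmid\alpha\beta$ is indispensable, for otherwise the supposed leading terms of the partial derivatives would vanish and the initial ideal would grow, deflating the dimension. One can also bypass standard-basis machinery by a direct argument: show via ascending induction on weight that $x^{\alpha-1}$ and $y^{\beta-1}$ lie in $J(f)+F^{\geq m}$ for every $m$, hence in $J(f)$ by completeness, giving $\mu(f)\leq(\alpha-1)(\beta-1)$; then verify linear independence of the claimed basis modulo $J(f)$ by comparing lowest-weight parts and using that any nontrivial relation would have to produce an element of $\mathrm{in}_w J(f)$ supported on the basic monomials, which is impossible.
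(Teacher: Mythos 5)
Your main argument is correct but takes a genuinely different route from the paper. The paper's proof is essentially a citation: it observes that $f$ is semi-quasihomogeneous with respect to the weights $(\beta,\alpha)$, with $(\beta,\alpha)$-order $\alpha\beta$ and initial form $ax^{\alpha}+by^{\beta}$, and then invokes the Appendix of Garc\'{\i}a Barroso--P\l oski to conclude $\mu(f)=(\alpha-1)(\beta-1)$. You instead give a self-contained computation: the hypotheses $p\nmid\alpha$, $p\nmid\beta$ guarantee that the weighted initial forms of $\partial f/\partial x$ and $\partial f/\partial y$ are the nonzero scalar multiples $a\alpha x^{\alpha-1}$ and $b\beta y^{\beta-1}$, and since these are coprime monomials (equivalently, a regular sequence in $\mathrm{gr}_w\Bbbk[[x,y]]=\Bbbk[x,y]$), the initial ideal of the Jacobian ideal is exactly $(x^{\alpha-1},y^{\beta-1})$, whence $\mu(f)=\dim_{\Bbbk}\Bbbk[x,y]/(x^{\alpha-1},y^{\beta-1})=(\alpha-1)(\beta-1)$. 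This is precisely the content of the cited appendix, so your proof buys self-containedness at the cost of importing standard-basis machinery (Grauert division and the product criterion for a local ordering, or equivalently the Valabrega--Valla type statement that a regular sequence of initial forms computes the initial ideal); the paper buys brevity at the cost of an external reference. Your identification of where $p\nmid\alpha\beta$ enters is exactly right.

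However, the alternative ``bypass'' in your last paragraph is flawed: it is \emph{not} true in general that $x^{\alpha-1}$ and $y^{\beta-1}$ lie in $J(f)$. For example, with $\alpha=5$, $\beta=3$ and $f=ax^{5}+by^{3}+cx^{4}y$ (the extra term has weight $17>15$, so this is an admissible $f$), one has $f_{x}=5ax^{4}+4cx^{3}y$, so $x^{4}\equiv-(4c/5a)\,x^{3}y \pmod{J(f)}$, and $x^{3}y$ is one of the basic monomials $x^{i}y^{j}$ with $i\le\alpha-2$, $j\le\beta-2$; since these are linearly independent modulo $J(f)$ (which is what the lemma asserts), $x^{4}\notin J(f)$ for generic $c$. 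The correct form of the upper-bound induction is weaker: every monomial divisible by $x^{\alpha-1}$ or by $y^{\beta-1}$ is congruent modulo $J(f)$ to an element of strictly larger weight, and iterating (using completeness) shows that the basic monomials \emph{span} $\Bbbk[[x,y]]/J(f)$, giving $\mu(f)\le(\alpha-1)(\beta-1)$. Since this bypass is offered only as an optional alternative and your primary standard-basis argument is sound, the proof stands, but the bypass as written should be corrected or deleted.
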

\begin{proof}
We use notations of \cite{MR3839793}. The $(\beta, \alpha)$-order of $f$ is
\[
\mathrm{ord} _{(\beta, \alpha)} (f) = \alpha \beta.
\]
The $(\beta, \alpha)$-initial of $f$ is 
\[
\mathrm{in} _{(\beta, \alpha)} (f) =ax^{\alpha} + by^{\beta}.
\]
Thus the formal power series $f$ is the semi-quasihomogeneous with respect to $(\beta, \alpha)$. By the Appendix of \cite{MR3839793}, 
\[
\mu (f) = (\alpha -1)(\beta -1).
\]
\end{proof}

\begin{proof}[Proof of Theorem 2]
The morphisms $\gamma _{q^2+q+1}$ and $\gamma _{q+1} $ satisfy
\[
\gamma _{q^2+q+1} \circ \gamma _{q+1} = \gamma _{q+1} \circ \gamma _{q^2+q+1} = \gamma _{(q^2+q+1)(q+1)}.
\]
By the definition of the Ballico-Hefez curve and the line $l = \gamma _{q^2+q+1} (C_0)$, we have
\[
B=\gamma _{q+1} (l) = \gamma _{q+1} (\gamma _{q^2+q+1} (C_0) ) = \gamma _{q^2+q+1} (\gamma _{q+1} (C_0) ) = \gamma _{q^2+q+1} (C^{\vee } _0),
\]
and hence (i) is proved.

We define $X\subset \mathbb{P} ^2$ by 
\[
X= \{x_0=0 \} \cup \{x_1=0 \} \cup \{x_2=0 \} .
\]
The Ballico-Hefez curve $B$ has $\frac{q^2-q}{2} $ ordinary nodes on $\mathbb{P} ^2\setminus X$ (see \cite[Theorem 2.2]{MR2961398}), and no singular points on $X$.
Let $H$ and $h$ be the defining polynomials of $C^{\vee } _0$ and $B$, respectively. Using Proposition 1.6 of \cite{MR3323512}, if $p=2$, then
\begin{equation}
\begin{split}
h &=  x_0 ^{q+1} + x_1^{q+1} + x_2^{q+1} +x_0^q x_2+x_1^q x_2 + x_0 x_2^q + x_1 x_2^q \\
&\quad + \sum_{i=0}^{\nu -1} x_0^{2^i} x_1^{2^i} (x_0 + x_1+ x_2) ^{q+1-2^{i+1} } , \label{6}
\end{split}
\end{equation}
whereas if $p$ is odd, then
\begin{equation}
\begin{split}
h &= x_0^{q+1} + x_1^{q+1} + x_2^{q+1} -x_0^q x_1 - x_0^q x_2 -x_0 x_1^q - x_1^q x_2 -x_0 x_2^q -x_1 x_2^q \\
 &\quad +  (x_0^2 + x_1^2 + x_2 ^2 - 2x_0x_1 - 2x_1x_2 - 2x_2x_0)^{\frac{q+1}{2} } . \label{7}
\end{split}
\end{equation}
By (i), the polynomial $H$ satisfies $H(x_0 , x_1, x_2 ) = h(x_0^{q^2+q+1} , x_1^{q^2+q+1}, x_2^{q^2+q+1} ) $, and two polynomials $H$ and $h$ are symmetric under the permutation of coordinates $x_0$, $x_1$ and $x_2$. First we consider the singularities of $C^{\vee } _0$ on $\mathbb{P} ^2 \setminus X$. The morphism $\gamma _{q^2+q+1} : \mathbb{P} ^2 \setminus X \to \mathbb{P} ^2 \setminus X $ is \'{e}tale of degree $(q^2+q+1)^2$. Thus, the ordinary nodes of $C^{\vee} _0$ on $\mathbb{P} ^2 \setminus X$ are $(q^2+q+1)^2(q^2-q)/2 $. 

Next, we consider the singularities of $C^{\vee } _0$ on $X$. $h(0, x_1, x_2) = 0$ if and only if $x_1 = x_2$ by (\ref{6}) and (\ref{7}). Moreover, the polynomial $H$ and its partial derivatives  $\partial H/\partial x_i  = x_i^{q^2+q} (\partial h/\partial x_i )$ vanish at a point in $\{ x_0 = 0 \} $. Thus all the points on $C_0^{\vee} \cap \{ x_0 = 0 \} $ are singular points of $C_0^{\vee}$. The morphism $\gamma _{q^2+q+1} |_{\{ x_0=0\} } $ restricted to $\{x_0=0\} $ is degree $q^2+q+1$. Thus the number of the singular points of $C^{\vee } _0$ on $\{ x_0=0 \} $ are $q^2+q+1$. Therefore, by the polynomial $H$ is symmetric, the number of the singular points of $C^{\vee } _0$ on $X$ are $3(q^2+q+1)$. 

Finally, since all Milnor numbers at points in $\gamma_{q^2+q+1} ^{-1} ([ 0 : 1 : 1 ]) $ are equal, we should caluculate the Milnor number at the point $[0 : 1 : 1 ] \in C^{\vee } _0$. 
If $p=2$, 
\begin{equation*}
\begin{split}
h(x_0^{q^2+q+1}, x_1+1, 1) &= x_0^{q^2+q+1} + x_1^{q+1} + x_0^{q(q^2+q+1)} +x_0^{(q+1)(q^2+q+1)} \\
 &\quad + \sum_{i=0}^{\nu -1} (x_0^{q^2+q+1} )^{2^i} (x_1+1)^{2^i} (x_0^{q^2+q+1} + x_1 )^{q+1-2^i },
\end{split}
\end{equation*}
whereas if $p$ is odd,
\begin{equation*}
\begin{split}
h(x_0^{q^2+q+1}, x_1+1, 1) &= -2x_0^{q^2+q+1} + x_1^{q+1} + x_0^{(q^2+q+1)(q+1)} \\ 
&\quad - x_0^{q(q^2+q+1)} x_1 - 2x_0 ^{q(q^2+q+1)} - x_0^{q^2+q+1} x_1^q \\
&\quad + (x_0^{2(q^2+q+1)} + x_1^2 -2x_0^{q^2+q+1} x_1 - 4x_0^{q^2+q+1} )^{\frac{q+1}{2} }.
\end{split}
\end{equation*}
By Lemma 3.1, the Milnor number of $h(x_0^{q^2+q+1}, x_1+1, 1)$ is 
\[
q(q^2+q)=q^2(q+1).
\]
\end{proof}
We confirm that the genus of the Fermat curve agree with the genus of its dual curve. The genus $g$ of the Fermat curve $C_0$ of the degree $d=q^2+q+1$ is 
\[
g=\frac{(d-1)(d-2)}{2} = \frac{(q^2+q)(q^2+q-1)}{2}. 
\]
Let $\mu _P$ be the Milnor number and let $r_P$ be the number of the branches at a singular point of the dual curve $C_0^{\vee}$. If a point $P\in C_0^{\vee}$ is an ordinary node, then $\mu_P = 1$ and $r_P =2$, whereas if a point $P$ is in $C_0^{\vee} \cap X$, then $\mu_P = q^2(q+1)$ and $r_P =1$. Thus the degree $d^{\vee}$ of $C_0^{\vee}$ is $ (q+1)(q^2+q+1)$, and the genus $g^{\vee}$ of $C_0^{\vee}$ is
\begin{equation*}
\begin{split}
g^{\vee} &= \frac{(d^{\vee} -1)(d^{\vee} -2)}{2} - \frac{1}{2} \sum_{P\in \mathrm{Sing} C_0^{\vee} } (\mu_P + r_P -1) \\
&= \frac{\{(q^2+q+1)(q+1)-1\} \{(q^2+q+1)(q+1)-2\}}{2} \\
&\quad -\frac{1}{2} \{ (q^2+q+1)^2(q^2-q)+3(q^2+q+1)q^2(q+1)\} \\
&= \frac{(q^2+q)(q^2+q-1)}{2}.
\end{split}
\end{equation*}

\bibliographystyle{abbrv}

\end{document}